\DeclareMathOperator{\red}{red}
\DeclareMathOperator{\reg}{reg}
\DeclareMathOperator{\Res}{Res}
\theoremstyle{plain}
\newtheorem{theorem}{Theorem}[section]
\newtheorem*{theorem*}{Theorem}
\newtheorem{lemma}[theorem]{Lemma}
\newtheorem{question}[theorem]{Question}
\theoremstyle{definition}
\newtheorem{remark}[theorem]{Remark}
\newtheorem{example}[theorem]{Example}
\newcommand{\enm}[1]{\ensuremath{#1}}          %
\newcommand{\cal}[1]{\mathcal{#1}}
\newcommand{\CC}{\enm{\mathbb{C}}}
\newcommand{\PP}{\enm{\mathbb{P}}}
\newcommand{\KK}{\enm{\mathbb{K}}}
\newcommand{\Aa}{\enm{\cal{A}}}
\newcommand{\Dd}{\enm{\cal{D}}}
\newcommand{\Ii}{\enm{\cal{I}}}
\newcommand{\Jj}{\enm{\cal{J}}}
\newcommand{\Ll}{\enm{\cal{L}}}
\newcommand{\Oo}{\enm{\cal{O}}}
\newcommand{\Rr}{\enm{\cal{R}}}
\newcommand{\Ss}{\enm{\cal{S}}}
\newcommand{\Zz}{\enm{\cal{Z}}}
\renewcommand{\phi}{\varphi}
\renewcommand{\theta}{\vartheta}
\renewcommand{\epsilon}{\varepsilon}
\begin{document}

\title[secant variety]{Ranks on the boundaries of secant varieties}

\author[E. Ballico]{Edoardo Ballico}
\address[Edoardo Ballico]{Dipartimento di Matematica,  Univ. Trento, Italy}
\email{edoardo.ballico@unitn.it }
\thanks{The author was partially supported by MIUR and GNSAGA of INdAM (Italy).}
\subjclass[2010]{14N05; 15A69} 
\keywords{secant variety; $X$-rank; tangential variety; join of two varieties; Segre variety; tensor rank}

\begin{abstract} In many cases (e.g. for many Segre or Segre embeddings of multiprojective spaces) we prove that a hypersurface of the $b$-secant variety
of $X\subset \PP^r$ has $X$-rank $>b$. We prove it proving that the $X$-rank of a general point of the join of $b-2$ copies of $X$ and the tangential variety
of $X$ is $>b$. \end{abstract}

\maketitle
\section{Introduction}

Let $X\subset \PP^r$ be an integral and non-degenerate variety defined over an algebraically closed field. For any $q\in X$ the $X$-rank $r_X(q)$ of $X$
is the minimal cardinality of a set $S\subset X$ such that $q\in \langle S\rangle$, where $\langle \ \rangle$ denote the linear span.
For any $q\in \PP^r$ let $\Ss (X,q)$ denote the set of all finite subsets $S\subset X$ such that $q\in \langle S\rangle$ and $\sharp (S)=r_X(q)$. For any integer
$s>0$ let $\sigma _s(X)\subseteq \PP^r$ be the $s$-secant variety of $X$, i.e. the closure of the union of all linear space
$\langle S\rangle$ with $S\subset X$ and $\sharp (S)=s$. See \cite{l} for many applications of $X$-ranks (e.g. the tensor rank) and secant varieties (a.k.a. the border rank). The algebraic set $\sigma _s(X)$ is an integral variety
of dimension $\le s(1+ \dim X) -1$ and $\sigma _s(X)$ is said to be non-defective if it has dimension $\min \{r,s(1+\dim X)-1\}$. Every secant variety of a curve
is non-defective (\cite[Corollary 4]{a}). Let $\tau (X)\subseteq \PP^r$ be the tangential variety of $X$, i.e. the closure in $\PP^r$ of the union of all tangent spaces
$T_pX$, $p\in X_{\reg}$. The algebraic set $\tau (X)$ is an integral variety of dimension $\le 2(\dim X)$ and $\tau (X)\subseteq \sigma _2(X)$. For any integer $b\ge 2$ let $\tau (X,b)$ denote the join of one copy of $\tau (X)$ and $b-2$ copies of $X$. If $X$ is a curve, then $\dim \tau (X,b) = \min \{r,s(1+\dim X)-2\}$ (use $b-2$ times \cite[part 2) of Proposition 1.3]{a}
and that $\dim \tau (X)=2$) and hence $\tau (X,b)$ is a non-empty codimension $1$ subset of $\sigma _b(X)$ if $X$ is a curve and $r> 2b$. For a variety $X$ of arbitrary dimensional
usually $\tau (X,b)$ is a hypersurface of $\sigma _b(X)$, but this is not always true. For instance, if $\sigma _2(X)$ has not the expected dimension one expects that
$\tau (X,b) = \sigma _b(X)$ and this is the case if $X$ is smooth (\cite[Corollary 4]{fh}).
A general $q\in \tau (X)$ has $r_X(q) =2$ (and hence for any $b \ge 2$ a general $o\in \tau (X,b)$ has $X$-rank $\le b$)
if a general tangent line to $X_{\reg}$ meets $X$ at another point of $X$, i.e. if $X$ is \emph{tangentially degenerate}
in the sense of \cite{k}. It is easy to check that $X$ is tangentially degenerate if and only if the curve $X\cap M\subset M$ is tangentially degenerate, where $M$ is a general
codimension $n-1$ linear subspace of $X$. H. Kaji proved that in characteristic zero a smooth curve in $\PP^m$, $t\ge 3$,
is not tangentially degenerate (\cite[Theorem 3.1]{k}) and this is true also if the normalization map of $X\cap M$ is unramified (\cite[Remark 3.8]{k}) or if $X\cap M$ has only toric
singularities (\cite{bp}). See \cite{k1} for the state of the art (at that time) on tangentially degenerate curves and a list to the examples known in positive characteristic.

In \cite{b} we raised the following question and gave a positive answer (in characteristic zero) when $X$ is a curve.

\begin{question}\label{z1}
Assume $b\ge 2$, $r\ge b(1+\dim X)-2$, and that $\sigma _s(X)$ has the expected dimension. Is $r_X(q)>b$ for a non-empty subset of $\sigma _b(X)$ of codimension
$1$ in $\sigma _b(X)$? Is $r_X(q)>b$ for a general point of $\tau (X,b)$?
\end{question}

Our aim is to refine this question for $n:= \dim X >1$ and get (in some cases) a positive answer. Take a general $q\in
\tau(X,b)$. There is $o\in X_{\reg}$, a degree $2$ connected zero-dimensional scheme $v$ with $v_{\red} = \{o\}$ and $p_1,\dots ,p_{b-2}\in X$ such that $p_i\ne p_j$ for all $i\ne j$, $p_i\ne o$ for all $i$ and $q\in
\langle v\cup
\{p_1,\dots ,p_{b-2}\}\rangle$. For a general $q\in \tau (x,b)$ the set $(o,p_1,\dots ,p_{b-2})$ is general in $X^{b-1}$ and $v$
is a general tangent vector to $X$ at $o$. Let $\Zz (X,b)$ be the set of all degree $b$ schemes $v\cup \{p_1,\dots ,p_{b-2}\}$
with
$p_i\ne p_j$ for all $i\ne j$ and $o:= v_{\red} \in X_{\reg}\setminus \{p_1,\dots ,p_{b-2}\}$. Let $\tau (X,b)'$ be the union of all
$q\in \tau (X,b)$ such that there is $Z\in \Zz (X,b)$ with $q\in \langle Z\rangle$. For any $q\in \tau (X,b)'$ let $\Zz
(X,b,q)$ be the set of all $Z\in \Zz (X,b)$ such that $q\in \langle Z\rangle$. 

\quad (i) Is $\dim \tau (X,b) = b(n+1)-2 = \dim \sigma _b(X)-1$?

\quad (ii) Is $\sharp (\Zz (X,b,q)) =1$ for a general $q\in \tau (X,b)'$?

\quad (iii) Is $r_X(q) >b$ for a general $q\in \tau (X,b)$? 

If (i) and (iii) are true, then the set of all $q\in \sigma _b(X)$ with $r_X(q)>b$ has dimension $b(n+1)-2$ (i.e. over $\CC$ it has Hausdorff dimension $2b(n+1)-4$). To get a positive answer the first part of  Question \ref{z1} for $X$ and $b$ it is not necessary to prove that (i) and (iii) hold and probably (ii) never
will be used to prove (i) and (iii), but (ii) is a nice question, similar to ask if $\sharp (\Ss (X,o)) =1$ for a general $o\in \sigma
_b(X)$ (this is called the identifiability of $\sigma _b(X)$). The way we prove  (iii) in the next theorem we get with a very
similar proof also (ii), while (i) comes for free. 

We prove the following result.

\begin{theorem}\label{i2}Take $b\ge 2$. 
Let $X\subset \PP^r$, be a an integral and non-degenerate variety, which is non-singular in codimension $1$. Set $n:= \dim X$.
Assume $\Oo _X(1) =\Ll\otimes \Rr$ and the existence of base point free linear spaces $V\subseteq H^0(\Ll)$, $W\subseteq
H^0(\Rr)$ such that $v:= \dim V\ge n+b+2$, $w:= \dim W \ge n+b+2$, the morphisms $u_V: X\to \PP^{v-1}$ and $u_W:
X\to
\PP^{w-1}$, are birational onto their images, that the closures of their images $X_V$ and $X_W$ have singular locus of
dimension
$\le n-1$, and that $\dim \sigma _2(X_V) =2n+1$.  Assume that the image of the multiplication map
$V\otimes W
\to H^0(\Oo _X(1))$ is contained in the image of the restriction map $H^0(\Oo _{\PP^r}(1)) \to H^0(\Oo _X(1))$ and it induces
an embedding.  Then
$\dim
\tau (X,b) =b(n+1)-2 = \dim \sigma _b(X) -1$ and a general $q\in \tau (X,b)$ has $r_X(q) >b$ and $\sharp (\Zz (X,b,q))=1$. 
\end{theorem}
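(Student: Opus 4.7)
The plan is to verify (ii) and (iii) simultaneously from a single cohomological obstruction, with (i) emerging as a byproduct. The upper bound $\dim \tau(X,b) \le b(n+1)-2$ is immediate from iterating the join inequality $\dim J(Y_1,Y_2) \le \dim Y_1 + \dim Y_2 + 1$, starting from $\dim \tau(X) \le 2n$; the matching lower bound will come from (ii), since once $\sharp(\Zz(X,b,q))=1$ is known generically, the parametrization of $\tau(X,b)$ by $(o,v,p_1,\dots,p_{b-2})$ plus a scalar coefficient has generically finite fibers.

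For the core argument, I would fix general data: $o \in X_{\reg}$, a tangent direction $v$ at $o$, general $p_1,\dots,p_{b-2}\in X$, set $Z := v\cup\{p_1,\dots,p_{b-2}\}$, and take a general $q \in \langle Z\rangle$. Suppose either (iii) fails, giving a reduced $A\subset X$ with $\sharp(A)\le b$ and $q\in\langle A\rangle$, or (ii) fails, giving some $A = Z'\in \Zz(X,b,q)$ distinct from $Z$. In either case set $Y := Z\cup A$, a zero-dimensional subscheme of $X$ of degree at most $2b$. The existence of two distinct minimal schemes spanning $q$ then forces, by a standard concision argument,
\[
h^1\bigl(X,\, \Ii_Y\otimes\Oo_X(1)\bigr) > 0.
\]

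To convert this into a contradiction I would exploit the factorization $\Oo_X(1)=\Ll\otimes\Rr$. For a general section $s\in W$ with divisor $D_s\subset X$, the restriction sequence
\[
0 \to \Ii_{\Res_{D_s}(Y)}\otimes \Ll \to \Ii_Y\otimes\Oo_X(1) \to \bigl(\Ii_Y\otimes\Oo_X(1)\bigr)\big|_{D_s} \to 0
\]
together with an iteration using further general sections of $W$ (the bound $\dim W\ge n+b+2$ provides enough freedom) should propagate the nonvanishing $h^1$ to $h^1(X,\Ii_{Y'}\otimes\Ll)>0$ for some residual subscheme $Y'\subseteq Y$ of degree $\le b+1$. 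Via $V\subseteq H^0(\Ll)$ and the birational morphism $u_V$, this translates to the failure of $u_V(Y')$ to impose independent conditions on $V$ in $\PP^{v-1}$.

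The final step invokes $\dim V\ge n+b+2$ and $\dim \sigma_2(X_V)=2n+1$: by Terracini's lemma on $X_V$, two general tangent spaces are in direct sum in $\PP^{v-1}$, hence a degree-$(b+1)$ subscheme consisting of at most one connected degree-$2$ component plus $\le b-1$ reduced points, with generic support, must impose independent conditions on $V$. The generality of $(o,v,p_1,\dots,p_{b-2})$ and of the witness $A$ transfers to generic support of $Y'$ in $X_V$ via birationality of $u_V$, yielding the desired contradiction. The hardest part will be the casework when $Z_{\red}$ and $A$ share points (some $p_i'=p_j$, or $o\in A$, or $v\subset A$ scheme-theoretically): the residual subscheme $Y'$ takes slightly different shapes in each case, and the generality hypotheses must be deployed carefully to avoid the tangentially degenerate configurations in the sense of Kaji.
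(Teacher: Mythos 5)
Your strategy is in the same spirit as the paper's (the obstruction $h^1(\Ii_{Z\cup A}(1))>0$, residuation exploiting $\Oo_X(1)=\Ll\otimes\Rr$, Terracini on $\sigma_2(X_V)$, Kaji's theorem), but two essential ingredients are missing. The first is the generality of the competing scheme $A$, on which your final contradiction rests. For the rank statement the witness $A=S$ is an arbitrary set computing $r_X(q)$; generality of $q$ in $\tau(X,b)$ makes $Z$ general but gives no control whatsoever on $S$. The paper devotes most of its step (d) to exactly this point: it studies the constructible family of all decompositions $S$ as $q$ varies in $\tau(X,b)$, and by a dimension count on that family together with the projections $X^b\to X^{b-1}$ it shows that one may choose $S$ with $b-1$ of its points general in $X^{b-1}$; only then do the counts $\dim V(-S')=v-b+1$, $\dim W(-S')=w-b+1$ and the base-locus argument apply. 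Your sentence that ``the generality of the witness $A$ transfers to generic support of $Y'$'' is precisely the statement requiring proof. For uniqueness the same issue appears in circular form: you propose to obtain $\dim\tau(X,b)=b(n+1)-2$ as a byproduct of $\sharp(\Zz(X,b,q))=1$, but the only way the paper puts the second scheme $A\in\Zz(X,b,q)$ in general position (its step ({c})) is a dimension count that already uses $\dim\tau(X,b)=b(n+1)-2$, which is proved beforehand by Terracini plus an inductive independent-conditions argument using sections $f\otimes g$ with $f\in V$, $g\in W$ (step (b)); moreover you never address $\dim\sigma_b(X)=b(n+1)-1$, which is part of the assertion $b(n+1)-2=\dim\sigma_b(X)-1$ and needs its own induction (step (a)).

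The second gap is the residuation itself. A general divisor $D_s$, $s\in W$, contains no component of $Y=Z\cup A$, so $\Res_{D_s}(Y)=Y$ and the exact sequence propagates nothing; if instead you take $s$ in a subsystem through part of $Y$, you must prove vanishing of $h^1$ for the trace on $D_s$, which your sketch does not address. The paper's substitute is to take $D$ a general member of $V(-Z)$, so that the residual is contained in $A$, and the crucial input is the identification of the scheme-theoretic base locus of $V(-Z)$: by Lemma~\ref{a2} (which rests on Lemma~\ref{a1}, the trisecant lemma, and Kaji's theorem applied to $X_V$) this base locus meets $X\setminus I_V$ exactly in $Z$, whence $A\not\subset D$, the residual is a nonempty subscheme $E\subseteq A$, and (for $A$ general) $\dim W(-E)=w-\deg(E)$ yields the contradiction. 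Without this key lemma, and without the generality of $A$ discussed above, the contradiction you aim for does not materialize, and the concluding appeal to Terracini for ``one degree-$2$ component plus $b-1$ reduced points'' is applied to a scheme whose support you cannot assume general.
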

If $u_v$ and $u_W$ are embeddings the assumptions on the singularities of $X_V$ and $X_W$ are satisfied if and only if $X$ is non-singular in codimension $1$.

We apply Theorem \ref{i2} to the case of certain Segre-Veronese embeddings of multiprojective spaces (see Example \ref{i1}),
but since we assumed that both $\Ll$ and $\Rr$ are birationally very ample, we cannot useTheorem \ref{i2} for the most important case:
tensors, i.e. the Segre embedding of a multiprojective space.  For tensors we prove the following result.

\begin{theorem}\label{i3}
Let $X\subset \PP^r$, $r+1=\prod _{i=1}^{s} (n_i+1)$,  be the Segre embedding of the multiprojective space
$\PP^{n_1}\times
\cdots \times \PP^{n_s}$. Fix an integer $b\ge 2$ and assume the existence of a decomposition
$E\sqcup F = \{1,\dots ,s\}$ such that $\prod _{i\in E} (n_i+1) > b+3+\sum _{i\in E} n_i$ and $\prod _{i\in E} (n_i+1) > b+3+\sum _{i\in F} n_i$.
Then $\dim \sigma _b(X) =b(n+1)-1$, $\dim \tau (X,b) =b(n+1)-2$ and $r_X(q) >b$ for a general $q\in \tau (X,b)$.\end{theorem}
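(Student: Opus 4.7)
The strategy is to adapt the proof of Theorem~\ref{i2} to the Segre setting, using the hypothesized decomposition $\{1,\dots,s\}=E\sqcup F$ to split $X$ as the Segre embedding of $X_E\times X_F$, where $X_E$ and $X_F$ are the Segre embeddings of $\prod_{i\in E}\PP^{n_i}$ and $\prod_{i\in F}\PP^{n_i}$. Then $\Oo_X(1) = \pi_E^*\Oo_{X_E}(1)\otimes \pi_F^*\Oo_{X_F}(1)$, playing the role of the decomposition $\Oo_X(1)=\Ll\otimes\Rr$ from Theorem~\ref{i2}. Neither projection $\pi_E$, $\pi_F$ is birational onto its image when both factors are positive-dimensional, so Theorem~\ref{i2} does not apply directly; the argument must be redone in this setting, which is the whole point of stating Theorem~\ref{i3} separately.

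First I would establish the dimensional claims. Write $n_E=\sum_{i\in E}n_i$ and $n_F=\sum_{i\in F}n_i$. The hypothesis $\prod_{i\in E}(n_i+1) > b+3+n_E$ and its counterpart bound $\prod_{i\in E}(n_i+1) > b+3+n_F$ place $X$ well inside the non-defective range for Segre secant varieties, so $\dim\sigma_b(X) = b(n+1)-1$. Iterating the join dimension inequality, using that $\dim\tau(X) = 2n$ (granted by non-defectivity of $\sigma_2(X)$), then gives $\dim \tau(X,b) = 2n + (b-2)n + (b-1) = b(n+1)-2$. For the rank lower bound, I would argue by contradiction: if a general $q\in\tau(X,b)$ had $r_X(q)\le b$, then there would exist $A\subset X_{\reg}$ with $\sharp(A)=b$ and a generic $Z_0 = v\cup\{p_1,\dots,p_{b-2}\}\in\Zz(X,b)$ such that $q\in\langle A\rangle\cap\langle Z_0\rangle$. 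Non-defectivity gives $\dim\langle A\rangle = \dim\langle Z_0\rangle = b-1$, hence $\dim\langle A\cup Z_0\rangle\le 2b-2$, and therefore $h^1(\Ii_W(1))\ge 1$ for $W:=A\cup Z_0$ of degree at most $2b$.

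The heart of the argument is extracting a contradiction from $h^1(\Ii_W(1))\ge 1$ using the product structure of $X$. I would apply a Horace-style residue argument: pick a suitable divisor $D\in|\pi_E^*\Oo_{X_E}(1)|$ (the pullback of a general hyperplane section of $X_E$) and split $W$ into $W\cap D$ and the residual scheme $\Res_D W$. The residue exact sequence, combined with the tensor decomposition of $\Oo_X(1)$, forces either $\pi_E(\Res_D W)\subset X_E$ or the restriction $W\cap D$ viewed through $\pi_F$ to fail to impose independent conditions on its respective hyperplane linear system. Both images have degree at most $2b$, and the hypotheses $\prod_{i\in E}(n_i+1)>b+3+n_E$ and $\prod_{i\in E}(n_i+1)>b+3+n_F$ are designed precisely so that no zero-dimensional scheme of degree $\le 2b$ can fail to impose independent conditions on $\Oo(1)$ on either factor; this yields the required contradiction, giving $r_X(q)>b$. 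The main obstacle is carefully tracking the tangent vector $v$ under projections: one must ensure that $\pi_E(v)$ and $\pi_F(v)$ remain degree-$2$ connected schemes rather than collapsing to a point, and handle the genuinely bilinear character of $h^1$ on a Segre product when $v$ lies tangent to neither pure fiber direction. These degenerate possibilities are ruled out by the genericity of $(o,p_1,\dots,p_{b-2})\in X^{b-1}$ and of the tangent direction of $v$ guaranteed by the definition of $\Zz(X,b)$.
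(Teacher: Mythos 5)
Your setup (split $\Oo_X(1)=\pi_E^*\Oo_{X_E}(1)\otimes\pi_F^*\Oo_{X_F}(1)$, reduce to $h^1(\Ii_{A\cup Z_0}(1))>0$) matches the paper, but the heart of your argument does not work. A Horace step with a \emph{general} $D\in|\Oo_X(E)|$ misses the zero-dimensional scheme $W=A\cup Z_0$ entirely, and all it yields is $h^1(\Ii_W(F))>0$, i.e.\ that the projection $\pi_F(W)$ fails to impose independent conditions on $\Oo_{X_F}(1)$. Your claimed contradiction rests on the assertion that ``no zero-dimensional scheme of degree $\le 2b$ can fail to impose independent conditions on $\Oo(1)$ on either factor,'' which is false: imposing independent conditions on $\Oo(1)$ is a matter of position, not degree (three collinear points already fail, and here $A$ is adapted to $q\in\langle Z_0\rangle$, so no genericity whatsoever is available for $A\cup Z_0$ or its projections). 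The numerical hypotheses $\prod_{i\in E}(n_i+1)>b+3+\sum n_i$ are not designed for such a degree bound; in the paper they serve to make Lemma \ref{a2} (and the tangential non-degeneracy input behind it) applicable to the lower-dimensional Segre varieties $X_E$ and $X_F$. The mechanism you are missing is twofold: first, the divisor $D\in|\Oo_X(E)|$ is chosen to \emph{contain} the competing scheme $W$ (possible since $\deg W\le b<h^0(\Oo_X(E))$), so that $\Res_D(Z\cup W)\subseteq Z$ and the genericity of $Z$ alone kills the residual $h^1$; this forces either $q\in\langle Z'\rangle$ for some $Z'\subsetneq Z$ (impossible) or $H^0(\Ii_W(E))=H^0(\Ii_Z(E))$ with $\pi_{E|W}$ an embedding. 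Second, and crucially, Lemma \ref{a2} applied to $X_E$ says that $\langle\pi_E(Z)\rangle$ meets $X_E$ scheme-theoretically only in the expected scheme, whence $\pi_E(W)=\pi_E(Z)$; since $\pi_E(Z)$ is non-reduced, $W$ cannot be a reduced set of $b$ points, which is what actually proves $r_X(q)>b$. No argument of the form ``degree $\le 2b$ plus large $h^0$ implies independent conditions'' can substitute for this scheme-theoretic linear-section step.

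There is a second gap: you obtain $\dim\tau(X,b)=b(n+1)-2$ by ``iterating the join dimension inequality,'' but that inequality only gives the upper bound $\dim\tau(X,b)\le b(n+1)-2$; joins can be defective, and knowing $\dim\tau(X)=2n$ does not control the join with $b-2$ further copies of $X$. The paper proves the equality by showing $\Zz(X,b,q)$ is finite for general $q$: if it were infinite, a one-parameter family $U\in\Gamma$ with $U_{\red}=Z_{\red}$ and $q\in\langle U\rangle\cap\langle Z\rangle$ would force $U\subset\langle Z\rangle$, contradicting Lemma \ref{a2} again. Your proposal has no replacement for this step either.
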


The assumptions of Theorem
\ref{i3} imply $\sharp (E)\ge 2$ and $\sharp (F)\ge 2$ and hence they exclude the case
$s=2, 3$. The exclusion of the case $s=2$ is not a fault of our too restrictive assumptions. If
$s=2$ every
$q\in
\tau (X)\setminus X$ has
$X$-rank
$2$ (\cite{bb1},
\cite[Proposition 1.1]{bl}) and hence a general $q\in \tau (X,b)$ has rank at most $b$. The paper \cite{bbcg}  contains $3$ results related to Theorem \ref{i3} (\cite[Theorems 3.1, 4.6 and 4.10]{bbcg}), but none of them covers
Theorem \ref{i3}. 

For a better description of the $X$-ranks of $\sigma _3(X)$ for $s=3$ see \cite{bl}. In this case $\tau (X)$
is not contained in the singular locus of $\sigma _2(X)$ (\cite[Theorem 1.3]{bl}. We expect that the same holds for $\tau
(X,b)$ for certain very positively embedded $X$. For the case $b=2$, see \cite{u}.

 We work over an algebraically closed field $\KK$ with characteristic zero.

\section{Proof of Theorem \ref{i2}}

For any integer $b>0$ let $\Aa (X,b)$ denote the set of all subsets of $X$ with cardinality $b$. For any zero-dimensional
scheme $Z\subset X$ and any effective Cartier divisor $D$ of $X$ the residual scheme of $Z$ with respect to $D$ is the closed
subscheme of $X$ with $\Ii _Z:\Ii _D$ as its ideal sheaf. We have $\Res _D(Z)\subseteq Z$ and
$\deg (Z) =\deg (Z\cap D)+\deg (\Res_D(Z))$. For any line bundle $\Ll$ on $X$ we have an exact sequence (the residual sequence
of $\Ii _Z\otimes \Ll$ with respect to $D$):
\begin{equation}\label{eqt1}
0 \to \Ii _{\Res _D(Z)}\otimes \Ll (-D)\to \Ii _Z\otimes \Ll \to \Ii _{Z\cap D,D}\otimes \Ll _{|D}\to 0
\end{equation}
For any $\Ll \in \mathrm{Pic}(X)$, any linear subspace $V\subseteq H^0(X,\Ll)$ and any zero-dimensional scheme $Z\subset X$ set
$V(-Z):= V\cap H^0(X,\Ii _Z\otimes \Ll)$.

For any integral variety $M$ and any $o\in M_{\reg}$ let $(2o, M)$ be the first infinitesimal neighborhood of $o$ in $M$, i.e. the closed subscheme of $M$ with
$(\Ii _{o,M})^2$ as its ideal sheaf.

\begin{lemma}\label{zz1}
Let $X\subsetneq \PP^r$, be an integral and non-degenerate variety, which is
scheme-theoretically
cut out by quadrics. Then $X$ is not tangentially degenerate.
\end{lemma}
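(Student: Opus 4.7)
The plan is to argue by contradiction: I would suppose $X$ were tangentially degenerate and then deduce that $X$ must be a linear subspace of $\PP^r$, contradicting the hypothesis that $X\subsetneq \PP^r$ is non-degenerate.

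First I would unpack the tangential degeneracy assumption as follows. For a general smooth point $o\in X_{\reg}$ and a general tangent vector $v$ to $X$ at $o$ (that is, a connected length-$2$ subscheme of $X$ supported at $o$), the tangent line $L:=\langle v\rangle$ meets $X$ at some further point $p\ne o$. Then the scheme $v\cup\{p\}$ has length $3$ and is contained in $L\cap X$.

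The quadric hypothesis now enters immediately. For every quadric $Q$ in the ideal of $X$, the length-$3$ subscheme $v\cup\{p\}$ sits inside $L\cap Q$; since a line either lies in a quadric or meets it in a subscheme of length at most $2$, we must have $L\subset Q$. As this holds for every quadric containing $X$ and $X$ is scheme-theoretically cut out by quadrics, it follows that $L\subset X$.

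To conclude, I would fix a general $o\in X_{\reg}$ and consider the set
\[
\{[v]\in \PP(T_oX)\ :\ \langle v\rangle \subset X\},
\]
which is Zariski-closed in $\PP(T_oX)$. By the previous step it contains a Zariski-dense subset, hence it equals all of $\PP(T_oX)$. Thus every line through $o$ lying in $T_oX$ is contained in $X$, so $T_oX\subseteq X$. Since $X$ is integral of dimension $\dim T_oX$, this forces $X=T_oX$; but then $X$ is a proper linear subspace of $\PP^r$, contradicting non-degeneracy.

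The main technical obstacle I expect lies in this last paragraph: the tangential degeneracy hypothesis is stated for a general \emph{pair} in the (projectivized) tangent bundle, so one must upgrade it to the existence of a Zariski-dense family of tangent directions at a single general point $o$. This is a standard generic-flatness/fiber-dimension argument applied to the incidence correspondence $\{(o,[v])\ :\ \langle v\rangle \cap (X\setminus\{o\})\ne\emptyset\}\to X_{\reg}$, but it is the one step that needs to be written out with care; everything else is a short formal consequence.
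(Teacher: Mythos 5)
Your proof is correct, and its core step is exactly the paper's: a tangent line that meets $X$ at a point besides the point of tangency meets every quadric through $X$ in a scheme of length $\ge 3$, hence lies in every such quadric, hence in $X$ since $X$ is scheme-theoretically cut out by quadrics. Where you differ is the endgame. The paper concludes that $\tau (X)\subseteq X$, so $\tau (X)=X$, and then cuts with a general codimension $n-1$ linear space $M$, reducing to an integral non-degenerate curve $X\cap M$ equal to its own tangential variety -- absurd, since such a curve would have to be a line while it spans $M$ with $\dim M\ge 2$. You instead upgrade the hypothesis to all tangent directions at a general point, conclude $T_oX\subseteq X$, and force $X=T_oX$ to be a proper linear subspace, contradicting non-degeneracy. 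Your finish is a bit more self-contained: it avoids the linear-section reduction (and the implicit Bertini-type facts that the general section is integral, non-degenerate and inherits the property), at the price of the fiberwise-density step you flag. That step is indeed routine, and can even be bypassed: the locus of pairs $(o,[v])$ in the projectivized tangent bundle over $X_{\reg}$ whose line $\langle v\rangle$ lies in $X$ is closed there, and by your first step it contains the dense (constructible) locus of pairs whose line meets $X$ elsewhere; hence it is everything, so $\Sigma_o=\PP (T_oX)$ for every $o\in X_{\reg}$, which gives $T_oX\subseteq X$ without any per-point genericity argument. Either way the argument is complete and of essentially the same length as the paper's.
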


\begin{proof}
Take a general $q\in X_{\reg}$ and a general line $L\subset \PP^r$ tangent to $X_{\reg}$ at $q$ and assume
that $(L\cap X)_{\red}$ contains a point $o\ne q$. Since the connected component of $L\cap X$ containing $q$ contains the
divisor
$2q$ of $L$ and $X$ is scheme-theoretically cut out by quadrics, we have $L\subset X$. Since $L$ is general, we get
$\tau (X)\subseteq X$ and so $\tau (X)=X$. Let $M\subset \PP^r$ be a general linear space with codimension $n-1$.
The scheme $X\cap M$ is an integral curve spanning $M$ and we get $\tau (X\cap M) =X\cap M$, contradicting the assumption
$X\subsetneq \PP^r$.
\end{proof}

\begin{remark}\label{zz0}
The homogeneous ideal of a Segre-Veronese variety $X\subset \PP^r$ is generated by the $2\times 2$ minors of flattenings
(\cite[Theorem 6.10.6.5]{l}) and in particular (unless $X=\PP^r$) it is not tangentially degenerate by Lemma \ref{zz1}. Just
to know that $X$ is scheme-theoretically cut out by quadrics (to be able to apply Lemma \ref{zz1}) is easier, since this is
easily seen to be true if it is true for the Segre embedding of $X$.
\end{remark}

\begin{lemma}\label{a1}
Let $X\subset \PP^r$, $r\ge 3+n$,  be an integral and non-degenerate $n$-dimensional variety, which is non-singular in codimension 1. Let $L\subset \PP^3$ be a
general tangent line to $X_{\reg}$. Let $\ell _L: \PP^r\setminus L\to \PP^{r-2}$ denote the linear projection from $L$. Then $\ell _{L|X\setminus X\cap L}$ is birational onto its
image.
\end{lemma}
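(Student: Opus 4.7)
The plan is to establish birationality by showing that $\ell_{L|X\setminus X\cap L}$ is generically injective. Set
\[
B_L := \{p\in X\setminus L:\langle L,p\rangle\cap (X\setminus L)\supsetneq\{p\}\};
\]
the claim is equivalent to $\dim B_L<n$. The key geometric observation is that if $p\in B_L$ is witnessed by some $p'\in X\setminus L$ with $p'\ne p$, then the chord $\overline{pp'}$ lies in the $2$-plane $\langle L,p\rangle$ and hence meets $L$ at a point $z$. To control such witnesses, I introduce the incidence
\[
I=\{(L,p,p',z)\mid L\in T(X),\ p,p'\in X\setminus L,\ p\ne p',\ z\in L\cap\overline{pp'}\},
\]
where $T(X)\subset\op{Gr}(1,\PP^r)$ is the variety of tangent lines to $X_{\reg}$, a $\PP^{n-1}$-bundle over $X_{\reg}$ so that $\dim T(X)=2n-1$.

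I bound $\dim I$ via projection to $\{(L,z):L\in T(X),\ z\in L\}$, which has dimension $2n$. For fixed $(L,z)$, the fiber consists of pairs $(p,p')$ of distinct points of $X\setminus L$ with $z\in\overline{pp'}$; equivalently, this is the double-point locus of the linear projection $\ell_z:X\dashrightarrow\PP^{r-1}$ (away from $L$), whose expected dimension is $2n-(r-1)=2n-r+1$. This expected upper bound holds for generic $(L,z)$ in view of the hypotheses: for $z\notin X$, generic $z\in L$ lies in the open locus of $\sigma_2(X)$ where the secant map is finite, while for $z=o\in L\cap X$, genericity of $L$ makes $o$ a generic smooth point of $X$ and the standard double-point estimate applies (using non-singularity in codimension $1$). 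Hence $\dim I\le 2n+(2n-r+1)=4n-r+1$, and projecting to $T(X)$ the generic fiber has dimension at most $4n-r+1-(2n-1)=2n-r+2$. Since $z=\overline{pp'}\cap L$ is determined by $(L,p,p')$, this fiber parametrizes the pairs $(p,p')$ that witness $p\in B_L$, so $\dim B_L\le 2n-r+2\le n-1<n$ under the hypothesis $r\ge n+3$, which yields birationality.

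The main obstacle is the upper bound on the fiber of $I\to\{(L,z)\}$: namely, that the double-point locus of $\ell_z$ has dimension at most $2n-r+1$ for generic $(L,z)$ with $z\in L$. Because $L\subset\tau(X)\subset\sigma_2(X)$ lies entirely in the secant variety, one has to preclude positive-dimensional families of multisecants through $z$ that would inflate this count. The non-singularity of $X$ in codimension $1$ combined with Kleiman-style generic transversality in characteristic zero, applied either directly to the chord variety and the Schubert cycle of lines through $z$ or via the secant map of the non-degenerate variety $X$, supplies the required bound.
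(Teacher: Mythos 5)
Your reduction to generic injectivity and the idea of bounding the bad locus $B_L$ are fine, but the heart of your argument -- the bound on the fibres of $I\to\{(L,z)\}$ -- is asserted rather than proved, and the tools you invoke cannot supply it. The point $z$ is not a general point of $\PP^r$, nor even a general point of $\sigma _2(X)$: it is constrained to lie on $\tau (X)$, a locus that moves with $X$ and is typically contained in the singular/special locus of $\sigma _2(X)$ where the fibres of the secant (join) map jump. Kleiman-type transversality requires moving one of the two cycles by a group acting transitively, and no group moves $z$ independently of $X$ here, so "generic $z\in L$ lies in the open locus of $\sigma_2(X)$ where the secant map is finite" has no justification. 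Worse, when $r\ge 2n+2$ your claimed bound $2n-r+1<0$ says that \emph{no} honest secant of $X$ passes through a general point of a general tangent line, i.e.\ that a general point of $\tau (X)$ is not spanned by two distinct points of $X$; for curves this is essentially the main result of \cite{b} (resting on Kaji's theorem and characteristic zero), so you are assuming a statement at least as strong as the lemma itself. There is also a bookkeeping gap: bounding $\dim I$ by $\dim\{(L,z)\}$ plus the \emph{generic} fibre dimension only controls components of $I$ dominating the $(L,z)$-base, while components sitting over special loci -- notably the section $z=o$, the point of tangency, which dominates $T(X)$ -- can have much larger fibres and these are exactly what enter your bound on the fibre over a general $L$. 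Concretely, take $X\subset \PP^5$ a cone over a non-degenerate curve in $\PP^4$ (integral, non-degenerate, non-singular in codimension $1$, $r=n+3$): the fibre of $I$ over $(L,o)$ contains all pairs of points on the ruling line through $o$, of dimension $n=2$, so your chain of inequalities only yields $\dim B_L\le n$, which proves nothing, even though the lemma is true there (the witnesses all project to the single ruling line).

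For comparison, the paper avoids any secant-dimension estimate at a point of $\tau (X)$: it argues by induction on $n$, cutting with a general hyperplane $H\supset L$ (so that $X\cap H$ is again integral, non-singular in codimension $1$, and $L$ is a general tangent line of $X\cap H$), and reduces to the curve case, which is \cite[Lemma 2.5]{b}. If you want to salvage your approach you would have to prove, not cite, the statement that the family of pairs of points of $X$ collinear with a general point of a general tangent line has dimension at most $2n-r+1$, and you would have to bound the image of the fibres of $I\to T(X)$ in $X$ (not the fibres themselves) to handle degenerate configurations such as lines contained in $X$ through the point of tangency.
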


\begin{proof}
Since we are in characteristic zero, it is sufficient to prove that $\ell _{L|X\setminus X\cap L}$ is generically injective, i.e. that for a general $q\in X$ the plane $\langle L\cup \{q\}\rangle$ intersects $X$ only in $q$ and the set $(X\cap L)_{\red}$. If $n=1$, then this is true by \cite[Lemma 2.5]{b}. Now assume $n>1$ and that the lemma is true for varieties of dimension $<n$.
Since
$n>1$, for a general hyperplane $M\subset \PP^n$ the scheme $X\cap M$ is an integral variety non-singular in codimension $1$ and spanning $M$. Since $\dim X\cap
M>1$, some tangent line of $X_{\reg}$ is contained in $M$ and it is tangent to $(X\cap M)_{\reg}$. Since $L$ is a general
tangent line
of $X_{\reg}$, we get that for a general hyperplane $H\supset L$ the scheme $X\cap H$ is integral and spans $M$.
Since $L$ is a general tangent line, the set $X\cap L$ is finite. Take $p\in X_{\reg}\cap L$ such that $L\subset T_pX$. Since
$\dim T_pX >1$ a general $H\supset L$ does not contain $T_pX$, i.e. $X\cap H$ is smooth at $p$. We move $L$ among the tangent lines
of $(X\cap H)_{\reg}$ and apply the inductive assumption to $X\cap H$. We get that for a general $q\in X\cap H$ the
plane
$\langle L\cup
\{q\}\rangle$ intersects $X\cap H$ (and hence $X$) only in $q$ and the set $(X\cap L)_{\red}$.
Moving
$H$ among the hyperplanes containing $L$ we get the lemma.
\end{proof}

\begin{lemma}\label{a2}
Fix an integer $b\ge 2$. Let $X\subset \PP^r$, $r\ge 4+n$,  be an integral and non-degenerate $n$-dimensional variety  which is non-singular in codimension $1$ and
take a general $Z\in \Zz (X,b)$. Write $Z = v\sqcup \{p_1,\dots ,p_{b-2}\}$ with $\deg (v)=2$ and
$v$ connected. Set $L:= \langle v\rangle$ and $M:=\langle Z\rangle$. Then
$\dim M
 =b-1$ and
$X\cap
M = \{p_1,\dots ,p_{b-2}\}\cup (X\cap L)$ (as schemes).
\end{lemma}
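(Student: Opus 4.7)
The plan is to project from the tangent line $L$ and reduce both claims to a general-position statement in the image space $\PP^{r-2}$. Write $Z = v\sqcup\{p_1,\dots,p_{b-2}\}$ with $L = \langle v\rangle$ and $o = v_{\red}$. The generality of $Z$, together with the hypothesis that $X$ is non-singular in codimension one, places $o\in X_{\reg}$, makes $L$ a general tangent line to $X_{\reg}$, and makes $(p_1,\dots,p_{b-2})$ a general element of $X^{b-2}$ with each $p_i$ outside $L$. Lemma \ref{a1} then applies: the linear projection $\ell_L:\PP^r\setminus L\to\PP^{r-2}$ restricts to a birational morphism $X\setminus(X\cap L)\to Y$, where $Y$ is the closure of its image. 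Because $X$ is non-degenerate, $Y$ is an integral, non-degenerate, $n$-dimensional subvariety of $\PP^{r-2}$, and the points $q_i:=\ell_L(p_i)$ are general in $Y$.

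The dimension statement and the set-theoretic containments both follow from analyzing $N:=\langle q_1,\dots,q_{b-2}\rangle$ and its intersection with $Y$. Non-degeneracy of $Y$ together with generality of the $q_i$ forces linear independence of the $q_i$, so $\dim N = b-3$ and hence $\dim M = \dim N + 2 = b-1$, since $\ell_L$ sends $M\setminus L$ surjectively onto $N$. The inclusion $\{p_1,\dots,p_{b-2}\}\cup(X\cap L)\subseteq X\cap M$ is clear from $L\subseteq M$ and $p_i\in X\cap M$. For the reverse inclusion, any $q\in X\cap M$ either lies on $L$, in which case $q\in X\cap L$, or projects to a point $\ell_L(q)\in Y\cap N$, and birationality of $\ell_L|_{X\setminus(X\cap L)}$ then identifies it, generically, with one of the $p_i$. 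The scheme structure along $L$ is automatic since $L\subset M$ gives $(X\cap M)\cap L = X\cap L$ as schemes, while reducedness of $X\cap M$ at each $p_i$ follows from the transversality $T_{p_i}X\cap T_{p_i}M = 0$, which holds generically provided $n+(b-1)\le r$.

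The hardest step I expect is the generic-position claim $Y\cap N = \{q_1,\dots,q_{b-2}\}$ as a reduced scheme, because $N$ is not a generic $(b-3)$-plane in $\PP^{r-2}$ but is constrained to pass through $b-2$ general points of $Y$. Set-theoretic equality I would attack by an incidence-correspondence on $Y^{b-2}\times Y$: if some extra point of $Y$ lay in $\langle q_1,\dots,q_{b-2}\rangle$ for generic $q_i$, then as the $q_i$ vary in $Y$ one obtains either a positive-dimensional family of unexpected incidences (contradicting non-degeneracy of $Y$ in $\PP^{r-2}$) or forces $Y$ into a proper linear subspace. Transversality at each $q_i$ then reduces to the dimension count $n+(b-3)\le r-2$, supplied by the hypothesis $r\ge n+4$ in the relevant range of $b$, and pulls back through $\ell_L$ to the reducedness of $X\cap M$ at each $p_i$. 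Once the generic-position claim is secured, the remaining steps are routine bookkeeping.
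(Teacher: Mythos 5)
Your overall strategy coincides with the paper's: project from the tangent line $L$, use Lemma \ref{a1} to get birationality of $\ell_{L|X\setminus X\cap L}$ onto $Y\subset\PP^{r-2}$, note that $(q_1,\dots,q_{b-2})$ is general in $Y^{b-2}$ so that $N=\langle q_1,\dots,q_{b-2}\rangle$ has dimension $b-3$ and $\dim M=b-1$, and then reduce everything to the statement $N\cap Y=\{q_1,\dots,q_{b-2}\}$ (as schemes). The difference is at exactly that statement, which you yourself flag as ``the hardest step'': the paper closes it in one line by invoking the trisecant lemma / uniform position principle in characteristic zero (\cite[p.~109]{acgh}), whereas you only offer a sketch --- ``either a positive-dimensional family of unexpected incidences, contradicting non-degeneracy of $Y$, or $Y$ is forced into a proper linear subspace.'' That dichotomy is not a proof. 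An extra point of $Y$ lying on the span of $b-2$ general points of $Y$ does not by itself contradict non-degeneracy (for $b-2=2$ this is precisely the question of whether the general chord of $Y$ is a trisecant, and non-degeneracy alone does not exclude it), nor does it obviously push $Y$ into a hyperplane; ruling it out is exactly the content of the uniform position/trisecant lemma, whose proof uses a genuine monodromy or general-projection argument and which really needs characteristic zero (in positive characteristic there are counterexamples, e.g.\ strange or tangentially degenerate curves --- the same phenomenon the surrounding Lemmas \ref{zz1} and \ref{a1} are guarding against). So as written your argument has a gap precisely where the paper leans on a citation.

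Two smaller points. First, the scheme-theoretic (reducedness) assertion at the points $p_i$ does not need your separate transversality argument with the extra numerical condition $n+(b-1)\le r$, which is not among the hypotheses of the lemma; in the paper it follows directly because the trisecant lemma gives the equality $N\cap Y=\{q_1,\dots,q_{b-2}\}$ already as schemes, and one pulls this back through the birational projection $\ell_L$, using that the $p_i$ are general (so $\ell_L$ is an isomorphism near them). Second, you should dispose of $b=2$ separately (then $Z=v$, $M=L$, and the statement is trivial), since the projection argument only makes sense for $b>2$. If you replace your incidence-correspondence sketch by an appeal to the uniform position principle, your proof becomes the paper's proof.
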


\begin{proof}
If $b=2$, then $L=M$ and the lemma is trivial. Now assume $b>2$. We have $\dim M =b-1$, because $p_1,\dots ,p_{b-2}$ are
general, $X$ is non-degenerate and $n+1\le r$. Let
$\ell _L:
\PP^r\setminus L\to
\PP^{r-2}$ denote the linear projection from $L$. Let $Y\subset \PP^{r-2}$ be the closure of $\ell _L(X\setminus X\cap L)$. By
Lemma
\ref{a1}
$\ell _L$ sends $X\setminus L\cap X$ birational into $Y$ and $\dim Y =n$. Since $Z$ is general, we have $p_i\notin L$
for all $i$ and hence the points $q_i:= \ell _L(p_i)$ are well-defined. For a general $Z$ the $b$-tuple $(q_1,\dots ,q_{b-2})$ is general
in $Y^{b-2}$. Hence $N:= \langle \{q_1,\dots ,q_{b-2}\}\rangle$ has dimension $b-3$. Since we are in characteristic zero, the
trisecant lemma (also known as the uniform position principle) (\cite[p. 109]{acgh})  implies that $N\cap Y =\{q_1,\dots ,q_{b-2}\}$  (as schemes). Since $p_1,\dots ,p_{b-2}$ are
general and $\ell _L$ is birational onto its image, we get the lemma.
\end{proof}

\begin{remark}\label{zzz2}Let $X\in \PP^r$ be an integral and non-degenerate variety. Set $n:= \dim X$. Let $\tau (X)\subseteq
\PP^r$ be the
tangential variety of $X$.
 In characteristic zero if $\tau (X)\ne \PP^r$ we have $X\subseteq \mathrm{Sing}(\tau (X))$. For a general $x\in \tau
(X)$ there is $o\in X_{\reg}$ and a line $L\subseteq T_oX$ with $x\in L\setminus \{o\}$. The tangent space of $\tau (X)$ is
constant at all points of $\tau (X)_{\reg}\cap L$. $L$ is uniquely determined by a degree $2$ zero-dimensional scheme $v\subset M$ such that $v_{\red} =
\{o\}$. Let $Z(o,v)$ denote the following zero-dimensional scheme of $X$ (and hence of $\PP^r$) with
$Z(o,v)_{\red} =\{o\}$ and $\deg (Z(o,v))=2n+1$. It is sufficient to define the ideal $\Jj$ of $Z(o,v)$ in the local ring $\Oo
_{X,o}$.  Since $\Oo _{X,o}$ is an
$n$-dimensional regular local ring, there is a regular system of parameters $x_1,\dots ,x_r$ such that $x_1^2,x_2,\dots ,x_n$
generate the ideal sheaf of $v$ in $\PP^r$. Take as $\Jj$ the ideal generated by all $x_ix_jx_k$, $i,j,k\in \{1,\dots, ,n\}$
and all $x_1x_i$, $i=1,\dots ,x_n$. Now we check that this definition depends only on $X$, $o$ and $v$,
but not on the choice of $x_1,\dots ,x_r$. Let $\mu$ be the maximal ideal of $\Oo _{X,o}$. Take another regular system of
parameters
$y_1,\dots ,y_n$ of
$\Oo _{X,o}$ with $y_1^2,y_2,\dots, y_n$ generating the ideal sheaf of $v$ in $X$. Since $\Oo _{X,o}$ is regular, the
completion
$\hat{\Oo}_{X,o}$ of
$\Oo _{X,o}$ with respect to its maximal ideal is isomorphic to
$\KK[[x_1,\dots ,x_n]]$. In $\KK[[x_1,\dots ,x_n]]$ we have $y_i=L_i+M_i$, with 
$M_i$ a power series with no constant and no linear term, $L_1,\dots ,L_n$ linear forms in $x_1,\dots ,x_n$ with invertible Jacobian with respect to $x_1,\dots
,x_n$ and there is a non-zero constant $c$ such that $x_1-cy_1\in (x_2,\dots ,x_n)+\mu ^2$. Thus $y_1,\dots ,y_n$ gives the
same ideal. We have $T_x\tau (X)\supset Z(o,v)$. Now assume that the scheme $Z(o,v)$ is linearly independent in $\PP^r$, i.e.
that $\dim \langle Z(o,v)\rangle =2n$. Since $\dim T_o\tau (X)=2n$ and $T_o\tau (X)\supset Z(o,v)$, we get $\langle Z(o,v)\rangle = T_o\tau (X)$.
\end{remark}

\begin{proof}[Proof of Theorem \ref{i2}:]
Taking a linear projection we reduce to prove the theorem when the map
$V\otimes W\to H^0(\Oo _{\PP^r}(1))$ is surjective. Let $I_V$ (resp. $I_W$) be proper closed subschemes of
$X$ such that
$u_V$ (resp.
$u_W$) is an embedding over
$X\setminus I_V$ (resp. $X\setminus I_W$) and $u_V^{-1}(u_V(X\setminus I_V)) = X\setminus I_V$ (resp. $u_W^{-1}(u_W(X\setminus
I_W)) = X\setminus I_W$).

\quad (a) In this step we prove that $\dim \sigma _b(X) =b(n+1)-1$. 
Fix a general $S\subset X$ with $\sharp (S)=b$. Set $Z:= \cup _{o\in S} (2o,X)$. By \cite[Corollary 1.10]{a} it is sufficient to prove that $h^1(\Ii _Z(1)) =0$. We use induction on the integer $b$, starting the induction here with the obvious case $b=1$. Fix $o\in S$ and set $S':= S\setminus \{o\}$ and $B:=  \cup _{o\in S'} (2o,X)$. By the inductive assumption
we may assume $h^1(\Ii _B(1)) =0$. Thus it is sufficient to prove that $(2o,X)$ gives $n+1$ independent conditions to $H^0(\Ii _B(1))$.
Since we may take $o$ general after fixing $S'$, $o$ is not in the base locus of $H^0(\Ii _B(1))$. Take $N\subseteq T_oX$ with
$0 \le \dim N \le n$ and maximal giving independent conditions to $H^0(\Ii _{B\cup \{o\}}(1))$ and let $N'\subseteq (2o,X)$
the corresponding zero-dimensional scheme with $\deg (N') =1 +\dim N$. Assume $N'\ne (2o,X)$ and fix $N''\subseteq (2o,X)$
with $\deg (N'') = \deg (N')+1$. To get a contradiction it is sufficient to prove that $H^0(\Ii _{N''}(1))\subsetneq H^0(\Ii
_{N'}(1))$. Since $S$ is general, we may assume $S\cap I_V = S\cap I_W =\emptyset$. Since $u_V$ and $u_W$ are embedding at
$o$, we have $\dim V(-N'') =\dim V(-N')-1$. Since we may take $S'$ general after fixing $o$, we have  $\dim V(-N'') =\dim
V(-N')-1$. Take $f\in V(-N'-S')\setminus V(-N''-S')$. Since we may take $o$ general after fixing $S'$, we have
$W(-S) \ne W(-S')$. Take $g\in W(-S') \setminus W(-S)$. The image of $f\otimes g$ shows that $H^0(\Ii _{N''}(1))\subsetneq H^0(\Ii _{N'}(1))$.

\quad (b) In this step we prove that $\dim \tau (X,b) =b(n+1)-2$. Fix a general $(o,o_1,\dots ,o_{b-2})\in X_{\reg}^{b-1}$ and a general tangent vector $v$ to $X$ at $o$.
Let $Z'$ be the degree $2n+1$ scheme associated to $o$ and $v$ as in Remark \ref{zzz2}. Set $Z'':= (2o_1,X)\cup \cdots \cup
(2o_{b-2},X)$ and
$Z:= Z'\cup Z''$. Since $\tau (X,b)$ is the join of $\tau (X)$ and $b-2$ copies of $X$, by Terracini lemma it is sufficient to
prove that $h^1(\Ii _Z(1)) =0$. For a general $(o,o_1,\dots ,o_{b-2})$ we may assume $o\notin (I_V\cup I_W)$. Since $u_V$ and
$u_W$ are embeddings at $o$, we have $\dim V(-(2o,X)) =v-n-1$ and $\dim W(-(2o,X)) =w-n-1$. Using $V\otimes W$ we see that
$H^0(\Oo _{\PP^r}(1))$ separates
the $2$-jets of $X$ at $o$ and in particular $h^1(\Ii _{Z'}(1)) =0$, concluding the proof of the case $b=2$. We
proved also that $(3o,X)$ is linearly independent in $\PP^r$, where $(3o,X)$ is the closed subscheme of $X$ with $(\Ii _{o,X})^3$ as
its ideal sheaf. Now assume
$b>2$ and that the last assertion is true for the integer $b-1$, i.e. assume that the zero-dimensional scheme $E:= (3o,X)\cup(2o_1,X)\cup
\cdots \cup (2o_{b-3},X)$ is linearly independent. To prove that $\dim \tau (X,b) =b(n+1)-2$ it is sufficient to prove that
$(3o,X)\cup(2o_1,X)\cup
\cdots \cup (2o_{b-2},X)$ is linearly independent. Fix an integer $a\in \{0,\dots ,n\}$ and schemes $A_1\subset A_2\subseteq
(2o_{b-2},X)$ with $\deg (A_1) =\deg (A_2)-1 =a$. By induction on $a$ it is sufficient to prove that $H^0(\PP^{v-1},\Ii _{E\cup A_2}(1))
\subsetneq H^0(\PP^{v-1},\Ii _{E\cup A_1}(1))$. Since $\dim  \sigma _2(X_V)=2n+1$, Terracini's lemma gives $h^1(\PP^{v-1},\Ii
_{(2u_V(o),X_V)
\cup (2u_V(o_{b-2}),X_V)}(1)) =0$ and hence $h^0(\PP^{v-1},\Ii _{(2u_V(o),X_V)\cup u_V(A_2)}(1)) < h^0(\PP^{v-1},\Ii _{(2u_V(o),X_V)\cup
u_V(A_f)}(1))$. Take $f\in V(-(2o)-A_2))$ with $f\notin V(-(2o)-A_1))$. Since $W$ is a local embedding at $o$, $o_1,\dots ,o_{b-2}$ are general and $\dim W \ge n+b-1$,
there is $g\in W(-2(o,X))$ such that $g(o_i)=0$ if and only if $i\ne b-2$. Use the image of $f\otimes g$.

\quad ({c}) In this step we prove that $\sharp (\Zz (X,b,q))=1$ for a general $q\in \tau (X,b)$. Fix a general $q\in \tau (X,b)$ and assume $\sharp (\Zz (X,b)) >1$ and so there are $Z,A\in \Zz (X,b)$
with $Z\ne A$. Since $\dim \tau (X,q) =b(n+1)-2$, a dimensional count shows that $\Zz (X,b,q)$ is finite for a general $q\in \tau (X,b)$. Hence we may assume
that $\Zz (X,b,q)$ is finite. A dimensional count gives that $Z$ and $A$ are general in $\Zz (X,b)$, but of course we do not assume any generality for $Z\cup A$. In particular
we may assume $Z\cap (I _V\cup I_W)=\emptyset$ and $A\cap (I_V\cup I_W) =\emptyset$.  Since $\dim V >b$, we get $\dim V(-Z) =\dim V -b >0$. Let $D\subset X$ be the hypersurface whose equation is a general 
element of $V(-Z)$. Let $E$ denote the residual scheme $\Res _D(Z\cup A)$ of $Z\cup A$ with respect to the effective Cartier
divisor
$D\subset X$. Since $Z\subset D$, we have $E =\Res _D(A)$. Thus $E$ is a closed subscheme of $A$ and $E=\emptyset$ if and only
if $A\subset D$. Note that each element of $\Zz (X,b)$ has only finitely many subschemes. Since
$\dim \tau (X,b) =b(n+1)-2$ and $q$ is general in $\tau (X,b)$, we have $q\notin \langle Z'\rangle$ for any $Z'\subsetneq Z$
and $q\notin \langle A'\rangle$ for any $A'\subsetneq A$. Since $q\in \langle Z\rangle \cap \langle A\rangle$, $A\ne Z$,
$q\notin \langle Z'\rangle$ for any $Z'\subsetneq Z$ and $q\notin \langle A'\rangle$ for any $A'\subsetneq A$, we have
$h^1(\PP^r,\Ii _{Z\cup A}(1)) >0$. 

Since $u_V(X)$ is not singular in codimension $1$ and it is embedded in a projective space of dimension $\ge n+2$, $u_V(X)$ is not tangentially degenerate (\cite[Theorem 3.1]{k}). By Lemma \ref{a2} applied to $X_V\subset \PP^{v-1}$ the scheme $u_V(Z)$ is the scheme-theoretic theoretical base locus of $X_V\cap \langle u_V(Z)\rangle$. Since $A\cap I_V=\emptyset$ and $A\ne Z$, $A$ is not contained in the base locus of $V(-Z)$. Since $D$ is a general element of $V(-Z)$, we get $A\nsubseteq D$, i.e. $E \ne \emptyset$.
Since $A$ is general in $\Zz (X,b)$ we have $\dim W(-A) =\dim W -\deg (A)$. Since $E\subseteq  A$, we have
$\dim W(-E) =w-\deg (E)$, a contradiction. The surjection $V\otimes W\to H^0(\Oo _{\PP^r}(1)_{|X})$ gives $h^1(\PP^r,\Ii
_{Z\cup A}(1)) =0$, a contradiction.

\quad (d)  In this step we prove that $r_X(q) > b$ for a general $q\in \tau (X,b)$. Since $\dim \tau (X,b)>\dim \sigma
_{b-1}(X)$ by step (b), we have $r_X(q)=b$. Take $Z\in \Zz (X,b,q)$ and $S\in \Ss (X,q)$. Since $S\in \Ss (X,q)$, there is no $S'\subsetneq S$ with $q\in \langle S'\rangle$. Since $\dim \tau (X,b)=b(n+1)-2$,
we may assume that $Z$ is general in $\Zz(X,b)$ and that $q\notin \langle Z'\rangle$ for any $Z'\subsetneq Z$. Since $Z$
is not reduced, we have $Z\ne S$. Hence
$h^1(\Ii _{Z\cup S}(1)) >0$. As in step
({c}) we see that
$Z$ is the intersection of the open set
$X\setminus I_V$ with the scheme-theoretic base locus of $V(-Z)$. Fix a general
$q\in
\tau (X,b)$ and assume
$r_X(q)
\le b$. Take
$Z\in
\Zz (X,b,q)$. If we have $S$ with $S\cap (I_V\cup I_W)=\emptyset$ and $\dim W(-S) =w-b$, then we may apply verbatim the
proof in step ({c}) with $S$ instead of $A$. If $S\cap (I_V\cup I_W)=\emptyset$ and $\dim V(-S) =v-b$, then we may apply
the proof in step ({c}) taking $(W,Z)$ instead of $(V,Z)$ and $(V,S)$ instead of $(W,A)$. Call $\tau\tau$ a
non-empty
open subset of $\tau (X,b)$ such that for each $q\in \tau \tau$ we have $r_X(q) =b$ and $q\in \langle Z\rangle$ with $Z$
sufficiently general in
$\Zz (X,b)$ (we need $\dim W(-Z) = w-b$, $\dim V(-Z) = v-b$, $Z\cap (I_V\cup I_W)=\emptyset$ and that $(X_V,u_V(Z))$ and $(X_W,u_W(Z))$ satisfy the thesis
of Lemma \ref{a2}). The set $\Ss (X,q)$ is constructible and hence it makes
sense to speak about the irreducible component of $\Ss (X,q)$ and of their dimension. Let $\sigma _b(X)'$ denote the set
of all $a\in \sigma _b(X)$ such that there is a finite set $B\subset X$ with $\sharp (B)=b$, $a\in \langle B\rangle$
and $a\notin \langle B'\rangle$ for any $B'\subsetneq B$. The set $\sigma _b(X)'$ is constructible (it is the image of a
an open subset of the abstract join of $b$ copies of $X$). Hence $\tau := \tau \tau \cap \sigma _b(X)'$ is constructible.
By assumption $\tau$ contains a non-empty open subset of $\tau (X,b)$ and hence it is irreducible
and of dimension $b(n+1)-2$. Let $\Gamma _b\subseteq X^{(b)}$ be the set of all $S\in \Ss (X,q)$ with $q\in \tau$.
Since $\dim \tau =n(b+1)-2$, we have $\dim \Gamma _b \ge nb-1$. If $\dim \Gamma _b = nb$, then for a general
$q\in \tau$ we may take as $S$ a general subset of $X$ with cardinality $b$, concluding the proof in this case.
Thus we may assume that each irreducible component of the constructible set $\Gamma _b$ has dimension $nb-1$.
Thus there is a non-empty open subset $\tau '$ of $\tau$ such that $\Ss (X,q)$ is finite for all $q\in \tau '$.
Restricting $\tau '$ if necessary we may assume that the positive integer $\sharp (\Ss (X,q))$ is the same for
all $q\in \tau '$. Let $X^{(b)}$ denote the symmetric product of $b$ copies of $X$ and let $m: X^b\to X^{(b)}$ be the quotient
map. Let $\Dd$ be an irreducible component of $m^{-1}(\Gamma _b)$ with dimension $nb-1$. For any $i=1,\dots ,b$ let
$\eta _i: X ^{b} \to X^{b-1}$ denote the projection onto the factors with indices in $\{1,\dots ,b\}\setminus \{i\}$. Since $\dim \Dd =nb-1$
either $\eta _i(\Dd)$ contains a non-empty open subset of $X^{b-1}$ or the closure of $\eta _i(\Dd)$ is a hypersurface
$\Delta$
of $X_{b-1}$ and $\Dd$ contains a non-empty open subset of $X\times \Delta$. Thus there is $j\in \{1,\dots ,s\}$
such that $\eta _{j|\Dd}$ is dominant. Thus for a general $q\in \tau$ we may find $S=\{p_1,\dots ,p_b\}\in \Ss (X,q)$
with $(p_1,\dots ,p_{b-1})$ general in $X^{b-1}$. Thus $\dim V(-S') = v-b+1$ and $\dim W(-S')=w-b+1$, where $S':= \{p_1,\dots
,p_{b-1}\}$. Hence $p_b$ is both in the base locus of $V(-S')$ and in the base locus of $W(-S)$. This is impossible, since
$X$ is embedded in $\PP^r$ and  (by our reduction at the beginning of the proof) the image of the map $\rho : V\otimes W\to
H^0(\Oo _{\PP^r}(1))$ is surjective.
\end{proof}

\begin{example}\label{i1}
Fix integer $s\ge 1$, $n_i>0$, $d_i$, $c_i$, $1\le i \le s$, such that $0 < c_i<d_i$ for all $i$.
Let $X\subset \PP^r$, $r+1=\prod _{i=1}^{s} \binom{n_i+d_i}{n_i}$, be the Segre-Veronese embedding of the multiprojective
space $\PP^{n_1}\times \cdots \times \PP^{n_s}$. Set $V:= H^0(\Oo _X(c_1,\dots ,c_s))$, $W:= H^0(\Oo _X(d_1-c_1,\dots
,d_s-c_s))$ and $n:= n_1+\cdots +n_s$. Fix an integer $b\ge 2$ such that $\prod _{i=1}^{s} \binom{n_i+c_i}{n_i} \ge b+n+2$,
$\prod _{i=1}^{s} \binom{n_i+d_i-c_i}{n_i}\ge n+b+2$
and either $s\ge 3$ or $s=2$ and $(c_1,c_2)\ne (1,1)$ or $s=1$ and $c_1\ge 3$. We claim that
$\dim
\tau (X,b) =b(n+1)-2$,
$\dim \sigma _b(X) = b(n+1)-1$ and $r_X(q)>b$ and $\sharp (\Zz (X,b,q)) =1$ for a general $q\in \tau (X,b)$.
By Remark \ref{zz0} to apply Theorem \ref{i1} it is sufficient to observe that the variety $\sigma _2(X_V)$ has dimension
$2n+1$, where
$X_V$ is the Segre-Veronese embedding of $X$ by the complete linear system $|\Oo _X(c_1,\dots ,c_s)|$, by \cite[Theorem
4.2]{ab}.
\end{example}

\section{Proof of Theorem \ref{i3}}

In this section $X=\PP^{n_1}\times \cdots \times \PP^{n_s}$. For any $i\in \{1,\dots ,s\}$ let $\pi _i: X\to \PP^{n_i}$ denote
the projection onto the $i$-th factor of $X$.
Set
$\Oo _X(\epsilon _i):=
\pi _i^\ast (\Oo _{\PP^{n_i}}(1))$. For any $E\subseteq \{1,\dots ,s\}$ set $\Oo _X(E):= \otimes _{i\in E}\Oo _X(\epsilon
_i)\in \mathrm{Pic}(X)$ and let $\pi _E: X\to \prod _{i\in E} \PP^{n_i}$ denote the projection onto the factors of $X$ with label in $E$.

\begin{proof}[Proof of Theorem \ref{i3}:] There are many papers, which could be used to see that $\dim \sigma _b(X) = b(n+1)-1$ (\cite{aop}, \cite{cgg}, and if $n_i=n$ for all $i$, \cite{li} (case $s=3$) and \cite{ah}, any $s$); this is also a consequence of \cite[Corollary 4.15]{bbcg}, which implies that $\Ss (X,o)$ is finite for a general $o\in \sigma _b(X)$. Take a general $q\in \tau (X,b)$. Thus there is $Z \in \Zz (X,b)$ with $q$ general in $\langle Z\rangle$. Since $q$ is general in $\tau (X,b)$, $Z$ is general in $\Zz (X,b)$ and $q$ is general in $\langle Z\rangle$.
In particular $q\notin \langle Z'\rangle$ for any $Z'\subsetneq Z$.
Take an integer $c\le b$ and assume the existence of $W\in (\Zz (X,c)\cup \Aa (X,c))$ with $q\in \langle W\rangle$ and $q\notin \langle W'\rangle$ for any $W'\subsetneq W$
and $W\ne Z$. Since $q\in (\langle Z\rangle \cap \langle W\rangle \setminus \langle Z\cap W\rangle)$, we have $h^1(\Ii _{Z\cup W}(1)) >0$.

\quad (a) Fix a decomposition $E\sqcup F$ with $\prod _{i\in E} (n_i+1) >b$ and $\prod _{i\in F} (n_i+1) >b$. In this step we prove that $c=b$ and that $h^1(\Ii _W(E)) =0$ and $H^0(\Ii _W(E))
=H^0(\Ii _Z(E))$; note that this would also imply that $\pi _{E|G}: G\to X_E$ is an embedding. Since $c\le b < \prod _{i\in E} (n_i+1)$, there is $D\in |\Oo _X(E)|$ with $D\supset W$. Thus $\Res _D(Z\cup W) = \Res _D(Z) \subseteq Z$. Since $\prod _{i\in E} (n_i+1) >b$ and $Z$ is general in $\Zz (X,b)$ we have $h^1(\Ii _Z(F)) =0$ and hence
$h^1(\Ii _{\Res _D(Z)}(F)) =0$. The residual exact sequence (\ref{eqt1}) of $\Ii _{Z\cup W}(1)$ with respect $D$ gives $h^1(D,\Ii _{(Z\cup W)\cap D,D}(E)) >0$.

\quad (a1) Assume $(Z\cup W)\cap D \ne Z\cup W$. Since $W\subset D$, we have $Z':= Z\cap D\subsetneq Z$. Since $\Res _D(W)=\emptyset$
and
$h^1(X,\Ii _{\Res _D(Z\cup W)}\otimes \Oo _X(F))=0$, the residual sequence of $\Ii _{Z\cup W}(1)$ with respect to $D$
gives $\langle Z\rangle \cap \langle W\rangle = \langle Z'\rangle \cap \langle W\rangle$. Thus $q\in \langle Z'\rangle$, a contradiction.

\quad (a2) Assume $(Z\cup W)\cap D =Z\cup W$, i.e. $Z\cup W\subset D$. By step (a1) we may assume that this is true for all
$D\in |\Ii _W(E)|$. Since $\deg (W)\le \deg (Z)$ and $h^1(\Ii _Z(E)) =0$, we get $\deg (W)=b$, $h^1(\Ii _W(E))=0$ (and in
particular
$\pi _{E|W}$ is an embedding) and that $H^0(\Ii _Z(E)) =H^0(\Ii _W(E))$.

\quad (a3) Exchanging  the role of $E$ and $F$ we also get $h^1(\Ii _W(F)) =0$, that $\pi _{F|W}$ is an embedding and that $H^0(\Ii _W(F)) =H^0(\Ii _Z(F))$. 

\quad (b) Take $E$ and $F$ as in step (a). Since $Z$ is general in $\Zz (X,b)$, the scheme $\pi _E(Z)$ is general in $\Zz (X_E,b)$. Since $H^0(\Ii _Z(E)) =H^0(\Ii _W(E))$,
Lemma \ref{a2} applied to $X_E$ gives $\pi _E(W)\subseteq \pi _E(Z)$. Since $\pi _{E|W}$ is an embedding, we first get $\pi _E(W) = \pi _E(Z)$ and
then that $W\in \Zz (X,q)$ and $W\notin \Aa (X,q)$. This is sufficient to see that $r_X(q) >b$. By step (a3) we also get
$\pi _F(W) =\pi _F(Z)$. Hence $\pi _i(W) = \pi _i(Z)$ for all $i\in \{1,\dots ,s\}$. This is not enough to say that $W$ has only finitely many possibilities
(obviously $W_{\red}$ has only finitely many possibilities) and so
to prove that $\dim \tau (X,b) =b(n+1)-2$ we need to work more. Fix again a general $q\in \tau (X,b)$ and assume that $\dim \tau (X,b) < b(n+1)-2$, i.e. assume
that $\Zz (X,b,q)$ is infinite. The set $\Zz (X,b,q)$ is constructible and hence it makes sense to speak about the dimensions of the irreducible components of $\Zz (X,b,q)$.
Since $\dim \tau (X,b) < b(n+1)-2$, each of the irreducible components of $\Zz (X,b,q)$ has positive dimension. Let $\Gamma$ be the irreducible component of $\Zz (X,b,q)$
containing $Z$.
A general $U\in \Gamma$ may be considered as a general element of $\Zz (X,b)$ and hence we may apply Lemma \ref{a2} for $X_V$ and $u_V(U)$ and for $X_W$ and $u_W(U)$.
Since there are only finitely many sets $W_{\red}$, $W\in \Gamma$, for a general $U\in \Gamma \setminus \{Z\}$ we have $U_{\red} = Z_{\red}$ and so $\deg (U\cap Z)=b-1$. Since $q\in (\langle Z\rangle
\cap \langle U\rangle \setminus \langle Z\cap U\rangle)$, we get $\langle U\rangle = \langle Z\rangle$ and hence $U\subset \langle Z\rangle$, contradicting Lemma \ref{a2}.
\end{proof}

\end{document}